\theoremstyle{plain}
\newtheorem{theorem}{Theorem}[section]
\newtheorem{proposition}[theorem]{Proposition}
\newtheorem{corollary}[theorem]{Corollary}
\theoremstyle{definition}
\newtheorem{definition}[theorem]{Definition}
\theoremstyle{remark}
\newcommand{\tp}{{\scriptscriptstyle\mathsf{T}}}
\newcommand{\pp}{{\scriptscriptstyle++}}
\let\latexcirc=\circ
\newcommand{\ccirc}{\mathbin{\mathchoice
  {\xcirc\scriptstyle}
  {\xcirc\scriptstyle}
  {\xcirc\scriptscriptstyle}
  {\xcirc\scriptscriptstyle}
}}
\newcommand{\xcirc}[1]{\vcenter{\hbox{$#1\latexcirc$}}}
\let\circ\ccirc
\let\O\undefined
\DeclareMathOperator{\O}{O}
\DeclareMathOperator{\Flag}{Flag}
\DeclareMathOperator{\V}{V}
\DeclareMathOperator{\tr}{tr}
\DeclareMathOperator{\Gr}{Gr}
\DeclareMathOperator{\diag}{diag}
\DeclareMathOperator{\rank}{rank}
\begin{document}
\title{Stiefel optimization is NP-hard}
\author[Z.~Lai]{Zehua~Lai}
\address{Department of Mathematics, University of Texas, Austin, TX 78712}
\email{zehua.lai@austin.utexas.edu}
\author[L.-H.~Lim]{Lek-Heng~Lim}
\address{Computational and Applied Mathematics Initiative, Department of Statistics,
University of Chicago, Chicago, IL 60637}
\email{lekheng@uchicago.edu}
\author[T.~Tang]{Tianyun Tang}
\address{Computational and Applied Mathematics Initiative, Department of Statistics,
University of Chicago, Chicago, IL 60637}
\email{tianyuntang@uchicago.edu}

\begin{abstract}
We show that linearly constrained linear optimization over a Stiefel or Grassmann manifold is NP-hard in general. We show that the same is true for unconstrained quadratic optimization over a Stiefel manifold. We will show that unless $\mathrm{P}=\mathrm{NP}$, these optimization problems over a Stiefel manifold do not have $\mathrm{FPTAS}$. As an aside we extend our results to flag manifolds. Combined with earlier findings, this shows that manifold optimization is a difficult endeavor --- even the simplest problems like LP and unconstrained QP are already NP-hard on the most common manifolds.
\end{abstract}
\maketitle

\section{Introduction}\label{sec:intro}

Aside from the Euclidean $n$-space, the three most common manifolds in applications are the Stiefel manifold of orthonormal $k$-frames in $n$-space, the Grassmann manifold of $k$-planes in $n$-space, and the Cartan manifold of centered ellipsoids in $n$-space. They also constitute the three canonical examples in manifold optimization \cite{EAS99, AMS}, with simple representations as submanifolds of orthonormal, projection, and positive definite matrices respectively:
\begin{equation}\label{eq:models}
\small
\begin{tblr}{@{}lll}
\textsc{manifold}  & \textsc{object} &  \textsc{matrix model} \\\hline
\text{Euclidean} & \text{points}  & \mathbb{R}^n \\
\text{Stiefel}  & k\text{-frames}  & \V(k,n) = \{ X \in \mathbb{R}^{n \times k} : X^\tp X = I \} \\
\text{Grassmann} & k\text{-planes}  & \Gr(k,n) = \{ X \in \mathbb{R}^{n \times n} : X^2 = X = X^\tp, \; \tr(X) = k \} \\
\text{Cartan} & \text{ellipsoids}  & \mathbb{S}^n_\pp = \{ X \in \mathbb{R}^{n \times n} : X = X^\tp, \; X \succ 0 \} \\
\end{tblr}
\end{equation}

The goal of this article is to fill in the gaps left unaddressed in \cite{ZLK24b}, which includes the following findings: \emph{unconstrained} QP over $\Gr(k,n)$ and $\mathbb{S}^n_\pp$ is NP-hard  \cite[Theorem~5.3 and Corollary~8.2]{ZLK24b} and furthermore has no $\mathrm{FPTAS}$ unless $\mathrm{P}=\mathrm{NP}$; on the other hand, \emph{unconstrained} LP over $\Gr(k,n)$, $\V(k,n)$, and $\mathbb{S}^n_\pp$ has closed-form polynomial-time solution \cite[Lemma~9.1]{ZLK24b}.

A glaring omission is unconstrained QP over $\V(k,n)$, which was left as an open problem in \cite[Section~10]{ZLK24b}. One may deduce that unconstrained \emph{cubic} programming is NP-hard over $\V(k,n)$ \cite[Theorem~7.2]{ZLK24b}. It is also well-known that unconstrained QP is NP-hard for $\V(n,n) = \O(n)$  \cite{Nem} and  polynomial-time for $\V(1,n) = \{x \in \mathbb{R}^n : \lVert x \rVert =1 \}$ \cite[Section~4.3]{vava}. But aside from these boundary cases, the computational complexity of unconstrained QP over $\V(k,n)$ for $1 < k < n$ is unknown.

Another omission of \cite{ZLK24b} is \emph{constrained} LP, i.e., optimization of a linear objective under linear constraints. It is well-known that LP is polynomial-time over $\mathbb{R}^n$ \cite{Kha} and the same is essentially true for LP over $\mathbb{S}^n_\pp$ --- semidefinite programming (SDP) solves it to arbitrary accuracy under mild assumptions \cite{NN}. LP has been shown to be NP-hard over $\V(k, n)$ in \cite[Proposition~2.2]{song2024} but the approach therein cannot be directly extended to demonstrate the nonexistence of $\mathrm{FPTAS}$. Nothing is known about the complexity of LP over $\Gr(k,n)$. More generally, $\Gr(k,n)$ is the $p = 1$ special case of a \emph{flag manifold} $\Flag(k_1,\dots,k_p,n)$ \cite{YWL}. For $p > 1$, the complexity of unconstrained QP and constrained LP over $\Flag(k_1,\dots,k_p,n)$ are both open.

Given that any constrained optimization problem likely contains LP as a special or degenerate case and any unconstrained optimization problem likely contains unconstrained QP as a special or degenerate case, it is natural to begin our investigation from these simplest problems. We summarize the state of our current knowledge in the following table:
\begin{center}\small
\begin{tblr}{
  colspec={l|c|c},
}
\SetCell[r=2]{c}{\textsc{manifold}} & \SetCell[c=2]{c}{\textsc{problem complexity}} \\
\hline
 & \textsc{LP} & \textsc{unconstrained QP} \\ \hline
Euclidean & P & P \\
Stiefel & NP-hard & \textbf{?} \\
Grassmann & \textbf{?} & NP-hard \\
Flag & \textbf{?} & \textbf{?} \\
Cartan & SDP & NP-hard
\end{tblr}
\end{center}

In this article, we will fill in the four missing gaps --- we will show that they are all NP-hard and that, unless $\mathrm{P}=\mathrm{NP}$, they have no $\mathrm{FPTAS}$. We will also provide a different reduction from the one used in \cite[Proposition~2.2]{song2024}, showing that LP over $\V(k, n)$ has no $\mathrm{FPTAS}$  unless $\mathrm{P}=\mathrm{NP}$. Note that this is already known for unconstrained QP over  $\Gr(k,n)$ and $\mathbb{S}^n_\pp$  by our earlier work in \cite{ZLK24b}. The established intractability extends to other models of these manifolds: As was shown in \cite{ZLK24b}, all known models of the Stiefel manifold may be transformed to one another in polynomial time, and likewise for all known models of the Grassmannian. 

All our reductions in this article will be from the stability number, max-cut, and clique number of unweighted undirected graphs, all famously NP-hard and, more importantly, are NP-hard to approximate unless $\mathrm{P}=\mathrm{NP}$ \cite{Has,Zuck}.

\subsection{Conventions and background}\label{sec:note}

When we write $\V(k,n)$ or $\Gr(k,n)$ in this article, we refer to the matrix models as described in \eqref{eq:models}.

We will denote entries of a matrix $X \in \mathbb{R}^{m \times n}$ in lower case $x_{ij}$, $i =1,\dots,m$, $j =1,\dots,n$. When delimited in parentheses, $(x_1,\dots,x_n)$ will denote a \emph{column} vector. Any vector $x \in \mathbb{R}^n$ will always be a column vector, i.e., $\mathbb{R}^n = \mathbb{R}^{n \times 1}$.  For any $X \in \mathbb{R}^{m \times n}$, we write
\begin{equation}\label{eq:diag}
\diag(X) \coloneqq \begin{cases}
(x_{11},\dots,x_{mm}) \in \mathbb{R}^m & \text{if } m \le n, \\
 (x_{11},\dots,x_{nn}) \in \mathbb{R}^n & \text{if }  n \le m.
\end{cases}
\end{equation}

For any $m \in \mathbb{N}$, we write $G_m$ for an $m$-vertex undirected graph with vertex set $\{1,\dots,m\}$ and edge set $E \subseteq \{1,\dots,m\} \times \{1,\dots,m\}$. We denote edges by $(i,j)$, adopting the convention that $(i,j)\in E$ if and only if $(j,i)\in E$. In particular, a sum over $E$ sums each edge twice, which leads to slightly neater expressions. We do not allow for self-loops so $(i,i) \notin E$ for any $i\in V$.

For easy reference, we reproduce \cite[Definition~2.5]{de2008} here, adapted for our context. Note that the following definition of $\mathrm{FPTAS}$ is also consistent with common usage in computer science \cite[Definition 9.10]{arora2009computational}.
\begin{definition}[Fully polynomial-time approximation scheme]\label{def:FPTAS}
With respect to a maximization problem over $\mathcal{M}$ and a function class $\mathscr{F}$, an algorithm $\mathscr{A}$ is called a fully polynomial-time approximation scheme or $\mathrm{FPTAS}$ if:
\begin{enumerate}[\upshape (i)]
\item For any instance $f \in \mathscr{F}$ and any $\varepsilon>0$, $\mathscr{A}$ takes the defining parameters of $f$ (e.g., coefficients of $f$ when $f$ is a polynomial), $\varepsilon$, and $\mathcal{M}$ as input and computes an $x_\varepsilon \in \mathcal{M}$ such that $f(x_\varepsilon)$ is a $(1-\varepsilon)$-approximation of $f_{\max}$, i.e.,
\[
f_{\max} - f(x_\varepsilon)  \le  \varepsilon(f_{\max} - f_{\min}),
\]
where $f_{\max}$ and $f_{\min}$ are the supremum and infimum of $f$ respectively.

\item The number of operations required for the computation of $x_\varepsilon$ is bounded by a polynomial in the problem size, and $1/\varepsilon$.
\end{enumerate}
\end{definition}

\section{LP is NP-hard over Stiefel, Grassmann, and flag manifolds}\label{sec:LP}

The reductions in this section will be based on stability number. Let $m \in \mathbb{N}$ and $G_m$ be as above. Recall that a set $S \subseteq \{1,\dots,m\}$ is said to be stable if $(i, j) \notin E$ for all $i, j \in S$. The size of the largest stable set $\alpha(G_m)$, the stability number of $G_m$, is well-known to be NP-hard \cite{Garey}. It has a formulation \cite[Equation~1.4]{Laurent} as a QP over $\mathbb{R}^n$,
\begin{equation}\label{eq:stab}
\alpha(G_m) = \max_{x \in \mathbb{R}^m} \biggl\{ \sum_{i=1}^m x_{i} : x_i + x_j \le 1  \text{ for all } (i,j) \in E, \; x_i^2 = x_i  \text{ for all }i =1,\dots,m\biggr\}.
\end{equation}
Here we use a neutral letter $m$ to denote the number of vertices as, depending on the circumstance, we may have either $k$ or $n$ playing the role of $m$ in our discussions below.

For any $r \le k \le n$, we will formulate the decision problem ``$\alpha(G_k) \ge r$?''\ as LP feasibility over the Stiefel manifold $\V(k, n)$.
\begin{theorem}[Stiefel LP is NP-hard]\label{thm:Stfeas}
Let $k \le n$ be positive integers and $G_k$ be a $k$-vertex undirected graph.
\begin{enumerate}[\upshape (i)]
\item The maximum value of the LP over $\V(k, n)$,
\begin{equation}\label{eq:stabV}
\begin{aligned}
\operatorname{maximize}  &\quad  x_{11} + \dots + x_{kk}\\
\operatorname{subject~to} &\quad  x_{ij} = 0, \; i \neq j,\\
& \quad  x_{ii} + x_{jj} \le 0, \; (i, j) \in E, \\
& \quad  X \in \V(k, n),
\end{aligned}
\end{equation}
is exactly $2\alpha(G_k) - k$. Unless $\mathrm{P} = \mathrm{NP}$, there is no $\mathrm{FPTAS}$ that is polynomial in $n$ and $k$ for LP over $\V(k, n)$.

\item For any $r \le k$, we have $\alpha(G_k) \geq r$ if and only if
\begin{equation}\label{eq:feas}
\biggl\{X \in \V(k, n): x_{ij} = 0\text{ for }i \neq j, \; x_{ii} + x_{jj} \le 0\text{ for all }(i,j) \in E, \; \sum_{i =1}^k x_{ii} \geq 2r-k \biggr\} \ne \varnothing.
\end{equation}
Consequently the LP feasibility problem over $\V(k,n)$ is NP-hard.
\end{enumerate}
\end{theorem}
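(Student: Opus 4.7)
The plan is to reduce the LP \eqref{eq:stabV} directly to the QP formulation \eqref{eq:stab} of the stability number. The crux is the observation that the off-diagonal constraints $x_{ij}=0$ for $i \neq j$, combined with the Stiefel constraint $X^\tp X = I_k$, collapse the continuous feasible set onto the hypercube $\{-1,+1\}^k$; NP-hardness and FPTAS inapproximability then transfer immediately from $\alpha$.

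Concretely, any $X \in \V(k,n)$ with $x_{ij}=0$ for all $i \neq j$ has its only possibly nonzero entries on the main diagonal $x_{11},\dots,x_{kk}$, so $X^\tp X$ is the diagonal matrix with entries $x_{jj}^2$. The constraint $X^\tp X = I_k$ therefore forces $x_{jj} \in \{-1,+1\}$ for every $j=1,\dots,k$, and conversely every such sign pattern yields a feasible $X$. Setting $y_j = (1+x_{jj})/2 \in \{0,1\}$ turns each edge constraint $x_{ii}+x_{jj} \le 0$ into the stability constraint $y_i+y_j \le 1$, and turns the objective $\sum_{j=1}^k x_{jj}$ into $2\sum_{j=1}^k y_j - k$. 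By \eqref{eq:stab} the LP maximum is exactly $2\alpha(G_k) - k$, which is the first assertion of (i); part (ii) is then immediate, since the set in \eqref{eq:feas} is nonempty iff the LP maximum is at least $2r-k$, iff $\alpha(G_k) \ge r$. NP-hardness of Stiefel LP feasibility follows at once from the NP-hardness of stability number.

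For the absence of an $\mathrm{FPTAS}$ polynomial in $n$ and $k$ under $\mathrm{P} \neq \mathrm{NP}$, I would apply a standard integrality-based gap argument. Taking $x_{jj}=-1$ for every $j$ is always feasible and gives $f_{\min}=-k$, so $f_{\max} - f_{\min} = 2\alpha(G_k) \le 2k$. If such an $\mathrm{FPTAS}$ existed, setting $\varepsilon = 1/(2k+1)$ would produce, in time polynomial in $n$ and $k$, some $x_\varepsilon$ with $f_{\max} - f(x_\varepsilon) \le 2k/(2k+1) < 1$; since $f_{\max} = 2\alpha(G_k) - k$ is an integer, rounding $\bigl(f(x_\varepsilon)+k\bigr)/2$ to the nearest integer recovers $\alpha(G_k)$ exactly in polynomial time, contradicting the assumed NP-hardness.

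No step is technically difficult. The single conceptual point I expect to be the main obstacle for the reader is recognizing the opening collapse: that the apparently benign linear side constraints $x_{ij}=0$ interact with $X^\tp X = I_k$ to fully discretize an otherwise continuous Stiefel feasible set. Once that is in hand, the rest is a direct change of variables together with the standard gap-amplification argument for FPTAS inapproximability.
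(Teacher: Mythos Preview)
Your proposal is correct and follows essentially the same route as the paper: the key collapse of the feasible set to $\{-1,+1\}^k$ via $X^\tp X=I_k$ and $x_{ij}=0$, the identification with stable sets (you do it through the affine substitution $y_j=(1+x_{jj})/2$, the paper directly via $S=\{i:x_{ii}=1\}$), and an integrality gap argument for the $\mathrm{FPTAS}$ claim. Your $\mathrm{FPTAS}$ argument is in fact slightly more careful than the paper's --- you explicitly compute $f_{\max}-f_{\min}=2\alpha(G_k)\le 2k$ and choose $\varepsilon=1/(2k+1)$, whereas the paper simply takes $\varepsilon=1/k$ without spelling out the range bound.
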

\begin{proof}

We will first establish the correspondence between the feasible set in \eqref{eq:stabV} and the set of stable subsets of $G_k$. The condition $x_{ij} = 0$, $i \ne j$, taken together with $X^\tp X = I_k$ implies that $X$ is a diagonal matrix with $x_{ii} = \pm 1$, $i =1,\dots,k$. Consider the set $S$ of indices $i$ with $x_{ii} = 1$. For any $i \ne j$, $i, j \in S$, $x_{ii} + x_{jj} = 2 > 0$, so $(i,j) \notin E$. Hence $S$ is a stable set of $G_k$. Conversely, given any stable set $S$ of $G_k$, define the diagonal matrix $X \in \V(k,n)$ with
\[
x_{ii} =
\begin{cases} +1  &\text{if } i \in S,\\
 -1 &\text{if } i \notin S.
\end{cases}
\]
Then $X$ is clearly feasible for \eqref{eq:stabV}. Furthermore, 
\[
\sum_{i =1}^k x_{ii} = |S| - (k - |S|) = 2|S| - k.
\]
Therefore, the maximum value of \eqref{eq:stabV} is $2\alpha(G_k) - k$. In addition, since the maximum value of \eqref{eq:stabV}  can only take on integer values, if we choose a relative error gap of $\varepsilon = 1/k$, then a $(1 - \varepsilon)$-approximation algorithm finds the stability number exactly. So there is no $\mathrm{FPTAS}$ for \eqref{eq:stabV} unless  $\mathrm{P} = \mathrm{NP}$. Lastly, the feasibility problem for \eqref{eq:stabV} is exactly \eqref{eq:feas}. Since the decision problem for stability number is NP-complete, the LP feasibility problem over $\V(k,n)$ is NP-hard.
\end{proof}

The ``no $\mathrm{FPTAS}$'' conclusion in Theorem~\ref{thm:Stfeas} can, in fact, be strengthened. By \cite{Zuck}, unless $\mathrm{P} = \mathrm{NP}$, the maximum of an LP over $\V(k, n)$ cannot be approximated to within a factor of $k^{1-\varepsilon}$ for any $\varepsilon > 0$ with a polynomial-time algorithm.  The NP-hardness of LP over $\V(k, n)$ in \cite{song2024} was established using a reduction from binary linear programming, a decision problem for which the notion of approximability does not apply. It is unlikely that one may deduce a ``no $\mathrm{FPTAS}$'' conclusion from this approach.

For any $k \le n$, we will formulate the decision problem ``$\alpha(G_n) \ge k$?''\ as LP feasibility over the Grassmannian $\Gr(k, n)$. Note that, in this case, it is no longer straightforward to write down an optimization problem like \eqref{eq:stabV} whose optimum value gives $\alpha(G_n)$.
\begin{theorem}[Grassmannian LP is NP-hard]\label{thm:Grfeas}
Let $k \le n$ be positive integers and $G_n$ be an $n$-vertex undirected graph.  Then $\alpha(G_n) \geq k$ if and only if
\[
\bigl\{X \in \Gr(k, n): x_{ij} = 0\text{ for }i \neq j, \; x_{ii} + x_{jj} \le 1\text{ for all }(i,j) \in E \bigr\} \ne \varnothing.
\]
Consequently the LP feasibility problem over $\Gr(k,n)$ is NP-hard.
\end{theorem}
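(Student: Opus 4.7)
The plan is to mirror the structure of the proof of Theorem~\ref{thm:Stfeas}, exploiting the fact that the constraint $x_{ij} = 0$ for $i \ne j$ forces a matrix in $\Gr(k,n)$ to be diagonal, at which point the idempotency $X^2 = X$ forces the diagonal entries to lie in $\{0,1\}$.

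First I would prove the ``only if'' direction. Given a stable set $S \subseteq \{1,\dots,n\}$ with $|S| \ge k$, pick any subset $S' \subseteq S$ with $|S'| = k$ and define $X = \diag(x_{11},\dots,x_{nn})$ by $x_{ii} = 1$ if $i \in S'$ and $x_{ii} = 0$ otherwise. Then $X = X^\tp$, $X^2 = X$, and $\tr(X) = k$, so $X \in \Gr(k,n)$. Off-diagonal entries vanish by construction, and for any edge $(i,j) \in E$ stability of $S' \subseteq S$ gives not both $i,j \in S'$, so $x_{ii} + x_{jj} \le 1$.

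Next I would prove the ``if'' direction, which is the main content. Suppose $X$ lies in the feasibility set. The constraint $x_{ij} = 0$ for $i \ne j$ means $X = \diag(x_{11},\dots,x_{nn})$. Combined with $X^2 = X$, each diagonal entry satisfies $x_{ii}^2 = x_{ii}$, hence $x_{ii} \in \{0,1\}$. The trace condition $\tr(X) = k$ then forces exactly $k$ of these entries to equal $1$. Let $S = \{i : x_{ii} = 1\}$, so $|S| = k$. For any $(i,j) \in E$ with $i \ne j$, the constraint $x_{ii} + x_{jj} \le 1$ combined with $x_{ii}, x_{jj} \in \{0,1\}$ rules out $x_{ii} = x_{jj} = 1$, so not both $i,j$ lie in $S$. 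Thus $S$ is a stable set of size $k$, giving $\alpha(G_n) \ge k$.

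Finally, the NP-hardness consequence is immediate: since the decision problem ``$\alpha(G_n) \ge k$?'' is NP-complete, and it reduces in polynomial time to checking nonemptiness of the set displayed in the statement (the reduction simply reads off the graph data into the linear constraints), LP feasibility over $\Gr(k,n)$ is NP-hard. I do not anticipate a genuine obstacle here; the only subtle point is to notice explicitly that diagonality plus idempotency forces the $\{0,1\}$-valued structure, after which everything follows exactly as in Theorem~\ref{thm:Stfeas}, but with a simpler correspondence (no need for the $\pm 1$ trick, since projection matrices naturally encode indicator vectors of subsets).
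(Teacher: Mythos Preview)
Your proposal is correct and follows essentially the same approach as the paper: the paper observes that a diagonal projection matrix of trace $k$ must have exactly $k$ ones on its diagonal (invoking $\tr(X) = \rank(X)$, equivalent to your $x_{ii}^2 = x_{ii}$ argument), then appeals to the same stability-set correspondence as in Theorem~\ref{thm:Stfeas}. Your write-up is simply more explicit about both directions of the equivalence, but the content is the same.
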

\begin{proof}
For a projection matrix $X$, $\tr(X) = \rank(X)$, and so $X \in \Gr(k,n)$ with $x_{ij} = 0$, $i \neq j$, must be a diagonal matrix with exactly $k$ ones on the diagonal. If the set above  is nonempty, then for any $X$ in this set, the ones on the diagonal of $X$ yield a stable set of cardinality $k$ following the same argument in the proof of Theorem~\ref{thm:Stfeas}. So the set is nonempty if and only if a stable set of size $k$ exists.
\end{proof}

With an eigenvalue decomposition, the model of Grassmannian as projection matrices in \eqref{eq:models} is easily seen to take an alternate form as
\[
\Gr(k,n) = \biggl\{ Q \begin{bmatrix} I_k & 0 \\ 0 & 0 \end{bmatrix} Q^\tp  \in \mathbb{S}^n : Q \in \O(n) \biggr\}.
\]
This generalizes to  give the quadratic model \cite[Table~2]{ZLK24b},
\[
\Gr_{a,b}(k,n) = \biggl\{ Q \begin{bmatrix} a I_k & 0 \\ 0 & b I_{n-k} \end{bmatrix} Q^\tp  \in \mathbb{S}^n : Q \in \O(n) \biggr\}
\]
for any $a \ne b$, which represents an exhaustive list of all minimal equivariant models of the Grassmannian of $k$-planes in $n$-space \cite{LK24a}. This last statement generalizes to flag manifolds.

Let $a_1,\dots,  a_{p+1} \in \mathbb{R}$ be any $p+1$ distinct real numbers. For $0\eqqcolon k_0 < k_1 < \dots < k_{p+1} \coloneqq n$, the \emph{flag manifold} of nested subspaces $\mathbb{V}_1 \subseteq \dots \subseteq \mathbb{V}_p$ of dimensions $\dim \mathbb{V}_j = k_j $, $j =1,\dots,p$, in $\mathbb{R}^n$  may be modeled as a set of matrices
\begin{equation}\label{eq:flag}
\Flag(k_1,\dots,  k_p, n)  \coloneqq \left\lbrace
Q {\setlength{\arraycolsep}{0pt}
\begin{bmatrix}
a_1 I_{n_1} & 0 & \cdots & 0\\[-1ex]
0 &  a_2 I_{n_2} &  &  \vdots \\[-1ex]
\vdots &  & \ddots &  0 \\
0 & \cdots & 0 &  a_{p+1} I_{n_{p+1}}
\end{bmatrix}}
Q^\tp\in \mathbb{S}^n : Q\in \O(n) 
\right\rbrace
\end{equation}
where $n_j \coloneqq k_j - k_{j-1} \in \mathbb{N}$, $j = 1,\dots,p+1$. Moreover, any minimal equivariant model of a flag manifold must take the form in \eqref{eq:flag} \cite{LK24a}.  Indeed the model of Grassmannian that we have been using above is just the $p = 1$, $a_1 =1$, $a_2 =0$ case.

Essentially the same proof will show that LP feasibility over the flag manifold is NP-hard, generalizing Theorem~\ref{thm:Grfeas} to all $p > 1$. We will pick $a_1,\dots,  a_{p+1} \in \mathbb{R}$ so that
\begin{equation}\label{eq:para}
a_1 > a_2 > \dots > a_p > a_{p+1} = 0
\end{equation}
and
\begin{equation}\label{eq:para2}
a_1 < 2a_p.
\end{equation}
Note that these are model parameters that can be chosen for our convenience, just as we set $a_1 =1$, $a_2 =0$ in the Grassmannian case.
\begin{theorem}[Flag LP is NP-hard]\label{thm:LPF}
Let $0\eqqcolon k_0 < k_1 < \dots < k_{p+1} \coloneqq n$ and $G_n$ be an $n$-vertex undirected graph. Then $\alpha(G_n) \geq k_p$ if and only if
\[
\bigl\{X \in \Flag(k_1,\dots,  k_p, n): x_{ij} = 0\text{ for }i \neq j, \; x_{ii} + x_{jj} \le a_1\text{ for all }(i,j) \in E \bigr\} \ne \varnothing.
\]
Consequently the LP feasibility problem over $\Flag(k_1,\dots,  k_p, n)$ is NP-hard.
\end{theorem}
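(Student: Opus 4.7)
The plan is to mimic the proof of Theorem~\ref{thm:Grfeas}, with the dichotomy ``$1$ on the diagonal vs.\ $0$ on the diagonal'' replaced by ``positive diagonal entry vs.\ zero diagonal entry.'' The structural input is that if $X\in\Flag(k_1,\dots,k_p,n)$ has $x_{ij}=0$ for all $i\ne j$, then $X$ is diagonal and orthogonally similar to $D\coloneqq\diag(a_1 I_{n_1},\dots,a_{p+1} I_{n_{p+1}})$; since the eigenvalues of a diagonal matrix are its diagonal entries, the diagonal of $X$ is a permutation of the multiset of eigenvalues of $D$. In particular, exactly $n_1+\dots+n_p=k_p$ entries lie in $\{a_1,\dots,a_p\}\subset(0,\infty)$ and the remaining $n-k_p$ entries equal $a_{p+1}=0$.

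For the forward direction, I would set $S\coloneqq\{i:x_{ii}>0\}$, so that $|S|=k_p$ by the observation above. The edge constraint together with the calibration~\eqref{eq:para2} does all the work: for any $i,j\le p$ we have $a_i,a_j\ge a_p$, hence $a_i+a_j\ge 2a_p>a_1$. So no edge $(i,j)\in E$ can have both $i,j\in S$, meaning $S$ is a stable set and $\alpha(G_n)\ge|S|=k_p$.

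For the converse, given a stable set $T\subseteq\{1,\dots,n\}$ with $|T|\ge k_p$, I would pick any $k_p$-subset $T_0\subseteq T$ (automatically stable) and construct the diagonal matrix $X$ placing $n_j$ copies of $a_j$ for $j=1,\dots,p$ in any order at positions indexed by $T_0$, and zeros at the remaining $n-k_p$ positions. Then $X=PDP^\tp$ for an appropriate permutation $P\in\O(n)$, so $X\in\Flag(k_1,\dots,k_p,n)$. For any $(i,j)\in E$, stability of $T_0$ forces at most one endpoint into $T_0$, so $x_{ii}+x_{jj}\le a_1+0=a_1$, confirming feasibility. NP-hardness of the feasibility problem then follows from NP-hardness of the stability decision problem, exactly as in Theorem~\ref{thm:Grfeas}.

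The only delicate point is verifying that the parameter conditions do what is needed: \eqref{eq:para} ensures that the ``zero block'' of $D$ is genuinely zero and that $a_1$ is the unique maximum, and \eqref{eq:para2} is precisely what forces any two positive diagonal entries to exceed the threshold $a_1$ while still permitting the pair $(a_1,0)$. Beyond this calibration there is no genuine obstacle; the argument is a direct lift of the Grassmannian case (which is recovered as $p=1$, $a_1=1$, $a_2=0$).
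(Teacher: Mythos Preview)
Your proof is correct and follows the same approach as the paper's: you identify that a diagonal $X$ in the flag manifold has diagonal entries forming a permutation of $a_1,\dots,a_{p+1}$ (with multiplicities $n_1,\dots,n_{p+1}$), then use \eqref{eq:para2} to force at least one endpoint of every edge to carry the value $0=a_{p+1}$, so that the support of nonzero diagonal entries is a stable set of size exactly $k_p$. Your write-up is more detailed than the paper's (which is quite terse and defers to Theorem~\ref{thm:Grfeas}), but the underlying argument is identical.
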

\begin{proof}
The proof is nearly the same as that of Theorem~\ref{thm:Grfeas}. The constraints $x_{ij} = 0$, $ i \neq j$, imply that $X$ is diagonal and the only possible values for $x_{ii}$'s are $a_1, \dots , a_{p+1}$. By \eqref{eq:para2}, the constraint $x_{ii} + x_{jj} \le a_1 < a_p + a_p$ for $(i,j) \in E$ implies that if $(i,j) \in E$, then at least one of $x_{ii}$ or $x_{jj}$ must be $0$. So the set is nonempty if and only if a stable set of size $k_p$ exists.
\end{proof}
In Proposition~\ref{prop:LPF}, we will see that \emph{unconstrained} LP over the flag manifold is polynomial-time solvable.

\section{Unconstrained QP  is NP-hard over Stiefel manifold}

The reduction in this section will be based on maximum cut.  Let $k \in \mathbb{N}$ and $G_k$ be as in Section~\ref{sec:note}. A cut of a partition of the vertex set $\{1,\dots,m\} = S \cup S^{\mathsf{c}}$ is the number of edges $(i,j) \in E$ with $i \in S$ and $j \in S^{\mathsf{c}}$. The size of the largest cut $\kappa(G_k)$, the max-cut of $G_k$, is again a celebrated NP-hard problem \cite{Garey}.  Let $A\in \mathbb{S}^k$ be the adjacency matrix of $G_k$. Then max-cut may be determined from the following QP with $\pm 1$-valued variables:
\begin{equation}\label{eq:maxcut}
4 \kappa(G_k)-2|E|+k =  \max_{x\in \{-1,1\}^k} x^\tp (I_k-A) x.
\end{equation}
This is a slight reformulation of \cite[p.~1119]{GW} that conforms to our convention on graphs in Section~\ref{sec:note}. In the following $\diag(X) \in \mathbb{R}^k$ as defined  in \eqref{eq:diag}.
\begin{theorem}[Unconstrained Stiefel QP is NP-hard]\label{thm:StQP}
Let $k \le n$ be positive integers and $G_k$ be a $k$-vertex undirected graph  with adjacency matrix $A\in \mathbb{S}^k$. The maximum of the unconstrained QP over $\V(k,n)$,
\begin{equation}\label{eq:StQP}
\max_{X\in \V(k,n)} \diag(X)^\tp  (I_k-A) \diag(X),
\end{equation}
is exactly $4 \kappa(G_k)-2|E|+k$. 
Unless $\mathrm{P} = \mathrm{NP}$, there is no $\mathrm{FPTAS}$ that is polynomial in $n$ and $k$ for unconstrained QP over $\V(k, n)$.
\end{theorem}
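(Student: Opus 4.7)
The plan is to show that the optimum of \eqref{eq:StQP} equals that of the discrete max-cut QP in \eqref{eq:maxcut}. The reduction hinges on understanding the image of the map $X \mapsto \diag(X)$ from $\V(k,n)$ to $\mathbb{R}^k$, and then passing from a continuous optimum on $[-1,1]^k$ to a Boolean optimum on $\{-1,1\}^k$.

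First I would pin down the image of $X \mapsto \diag(X)$. Each column of $X \in \V(k,n)$ is a unit vector, so $|x_{jj}| \le \|x_j\|_2 = 1$ and $\diag(X) \in [-1,1]^k$. Conversely, for any $s \in \{-1,1\}^k$, stacking $\diag(s)$ on top of an $(n-k) \times k$ zero block produces an element of $\V(k,n)$ whose diagonal is $s$. Hence $\{-1,1\}^k \subseteq \{\diag(X) : X \in \V(k,n)\} \subseteq [-1,1]^k$.

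Second I would show that $f(y) \coloneqq y^\tp (I_k - A) y$ attains its maximum over the box $[-1,1]^k$ at a vertex. Since $G_k$ has no self-loops, $(I_k - A)_{ii} = 1 > 0$, so with all other coordinates fixed, $f$ is a univariate quadratic in $y_i$ with positive leading coefficient, hence convex. A function convex in each variable separately attains its maximum on a box at a vertex of that box. Combined with the image characterization,
\[
\max_{X \in \V(k,n)} \diag(X)^\tp (I_k - A) \diag(X) \;=\; \max_{y \in \{-1,1\}^k} y^\tp (I_k - A) y \;=\; 4\kappa(G_k) - 2|E| + k
\]
by \eqref{eq:maxcut}, and NP-hardness of \eqref{eq:StQP} follows from that of max-cut.

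Finally, for the FPTAS claim I would reuse the integer-rounding argument from Theorem~\ref{thm:Stfeas}: the optimum value is an integer, and $f_{\max} - f_{\min}$ is bounded by a polynomial in $k$ (for instance $2 \|I_k - A\|_{\mathrm{op}} k \le 2k(k+1)$ suffices), so an FPTAS with $\varepsilon$ of order $1/k^2$ would produce an output within additive error less than $\tfrac{1}{2}$ of the integer optimum, recovering $\kappa(G_k)$ exactly in polynomial time and forcing $\mathrm{P} = \mathrm{NP}$. The main subtlety is the separate-convexity step: $I_k - A$ is typically indefinite, so a priori the continuous relaxation over $[-1,1]^k$ could exceed the Boolean optimum; separate convexity, guaranteed by the no-self-loop condition $(I_k - A)_{ii} > 0$, is precisely what makes the Stiefel problem tight with max-cut.
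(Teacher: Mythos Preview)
Your proposal is correct and follows essentially the same route as the paper: both show $\diag(\V(k,n)) \subseteq [-1,1]^k$ with the vertices $\{-1,1\}^k$ realized by signed identity blocks, then argue that the box relaxation is tight because $(I_k-A)_{ii}=1>0$. Your phrasing via separate convexity is exactly what the paper establishes by its explicit one-variable perturbation $x(t)=x_*+te_i$, and your FPTAS rounding argument with $\varepsilon$ of order $1/k^2$ matches the paper's $\tfrac{1}{2k^2}$ threshold.
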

\begin{proof}
We first show that \eqref{eq:maxcut} is equivalent to the following box-constrained QP problem:
\begin{equation}\label{eq:box}
\max_{x\in [-1,1]^k} x^\tp (I_k-A) x .
\end{equation}
Note that this quadratic form is nonconvex and so the equivalence does not follow from Bauer maximum principle; and while there are similar formulations  \cite[Equation~4]{de2008}, we found none like \eqref{eq:box} that perfectly suits our need here. So we will provide a proof of this equivalence for convenience. The adjacency matrix $A$ has all diagonal entries zero; so the diagonal entries of $I_k - A$ are strictly positive. This is the only observation needed to establish the equivalence. Let $x_*=(x_1^*,\dots,x_n^*)\in [-1,1]^k$ be a maximizer of \eqref{eq:box}. We want to show that $x_*\in \{-1,1\}^k$. Suppose on the contrary that there exists $i\in \{1,\dots,k\}$ with $-1<x^*_i<1$. For any $t\in \mathbb{R}$, consider the vector $x(t) \coloneqq x_*+t e_i$, where $e_i$ is the $i$th column of $I_k$. As $-1<x^*_i<1$,  $x(t)\in [-1,1]^k$ when $|t|$ is sufficiently small.  Moreover,
\begin{align*}
x(t)^\tp (I_k-A) x(t) &=x_*^\tp (I_k-A) x_*+2 t e_i^\tp (I_k-A) x_*+t^2 e_i^\tp (I_k-A) e_i \\
&=x_*^\tp (I_k-A) x_*+\beta t+t^2
\end{align*}
for some $\beta\in \mathbb{R}$. So for sufficiently small $t$, we have $x(t)^\tp (I_k-A) x(t)>x_*^\tp (I_k-A) x_*$, contradicting the optimality of $x_*$ for \eqref{eq:box}.

Next we show that \eqref{eq:StQP} is also equivalent to \eqref{eq:maxcut} and \eqref{eq:box}. Let $f_1$ and $f_2$ be the maximal values of \eqref{eq:StQP} and \eqref{eq:box} respectively. For any $X\in \V(k,n)$, we have $\diag(X)\in [-1,1]^n$.  So $f_1\leq f_2$.  If $x_* = (x_1^*,\dots,x_n^*) \in [-1,1]^n$ is a maximizer of \eqref{eq:box}, then $x_*\in \{-1,1\}^k$ and thus
\[
X_*\coloneqq \begin{bmatrix} x_1^*&  0 & \cdots & 0 \\
0 & x_2^*&   \cdots & 0 \\
\vdots & & \ddots & \vdots \\
0 & 0 & \cdots & x_k^*\\
0 & 0& \cdots & 0\\
\vdots & \vdots & & \vdots \\
0 & 0 & \cdots & 0
\end{bmatrix}\in \V(k,n).
\]
Hence
\[
f_2=x_*^\tp(I_k-A) x_*=\diag(X_*)^\tp  (I_k-A) \diag(X_*)\leq f_1
\]
and we have $f_1=f_2$. The same argument at the end of the proof of Theorem~\ref{thm:Stfeas} shows that with  a relative error gap of $\varepsilon = 1/(2k^2)$, there is no polynomial-time $(1 - \varepsilon)$-approximation algorithm for \eqref{eq:StQP} unless $\mathrm{P} = \mathrm{NP}$.
\end{proof}
 
Again, the ``no $\mathrm{FPTAS}$'' conclusion in Theorem~\ref{thm:StQP} can  be strengthened. By \cite{Has}, unless $\mathrm{P} = \mathrm{NP}$, the maximum of an unconstrained QP over $\V(k, n)$ cannot be approximated to within a factor of $\frac{17}{16}-\varepsilon$ for any $\varepsilon > 0$ with a polynomial-time algorithm.

\section{Unconstrained QP  is NP-hard over flag manifold}

The reduction in this section will be based on clique number. Let $n \in \mathbb{N}$ and $G_n$ be as in Section~\ref{sec:note}. Recall that a set $S \subseteq \{1,\dots,n\}$ is a clique if $(i,j) \in E$ for all $i, j \in S$. The size of the largest clique $\omega(G_n)$, the clique number of $G_n$, is again famously NP-hard \cite{Garey}. It has an equally famous formulation \cite{motzkin1965}  as a QP over the unit simplex,
\begin{equation}\label{eq:MS}
1-\frac{1}{\omega(G_n)} = \max_{x \in \Delta_{1, n}}   \sum_{(i,j) \in E} x_i x_j.
\end{equation}
We denote our unit simplex by $\Delta_{1, n} \coloneqq \{x \in \mathbb{R}^n :  x_1 + \dots + x_n = 1, \; x_i \ge 0, \; i=1, \dots, n\}$ for consistency with later notation. Our convention of summing over each undirected edge twice gives a slightly different expression from Motzkin--Straus's.

We will add to the notations introduced in Section~\ref{sec:LP} for flag manifolds. First we define the vector
\begin{equation}\label{eq:vec}
 (\overbrace{a_1, \dots a_1}^{n_1}, \overbrace{a_2, \dots, a_2}^{n_2}, \dots, \overbrace{a_{p+1}, \dots, a_{p+1}}^{n_{p+1}}) \in \mathbb{R}^n,
\end{equation}
noting that it is the diagonal of the block diagonal matrix appearing in \eqref{eq:flag}. We also observe that any $X \in \Flag(k_1,\dots,  k_p, n)$ has constant trace given by
\begin{equation}\label{eq:bn}
\tr(X) = \sum_{j=1}^{p+1} n_j a_j \eqqcolon b_n.
\end{equation}
Next we define the following partial sums of the vector in \eqref{eq:vec}:
\[
b_1 \coloneqq a_1,\; b_2 \coloneqq 2a_1, \dots, b_{n_1} \coloneqq n_1 a_1, \; b_{n_1+1} \coloneqq n_1 a_1 + a_2, \dots, b_{n_1 + n_2} \coloneqq n_1 a_1 + n_2 a_2, \dots,
\]
noting that $b_{n_1+\dots+n_{p+1}}=b_n$ is exactly \eqref{eq:bn}. We are now ready to establish an extension of \cite[Propositions~5.1 and 5.2]{ZLK24b} to flag manifolds.

\begin{proposition}[Clique number as QP over flag manifold]\label{prop:CQPF}
Let $0 < k_1 < \dots < k_p < n$ and $G_n$ be an $n$-vertex undirected graph. Let
\begin{equation}\label{eq:k}
k \coloneqq \inf\Bigl\{m \in \mathbb{N}: \frac{j}{m} \le \frac{b_j}{b_n} \text{ for }  j= 1, \dots, m \Bigr\}.
\end{equation}
If $\omega(G_n) > k$, then
\begin{equation}\label{eq:clique}
\max_{X \in \Flag(k_1,\dots,  k_p, n)} \sum_{(i,j) \in E} x_{ii} x_{jj} = b_n^2\Bigl(1-\frac{1}{\omega(G_n)}\Bigr).
\end{equation}
\end{proposition}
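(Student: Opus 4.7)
My plan is to reduce to the Motzkin--Straus formulation \eqref{eq:MS} via the Schur--Horn theorem. Every $X \in \Flag(k_1,\dots,k_p,n)$ is a symmetric matrix with multiset of eigenvalues
$$\lambda \coloneqq (\underbrace{a_1,\dots,a_1}_{n_1}, \underbrace{a_2,\dots,a_2}_{n_2}, \dots, \underbrace{a_{p+1},\dots,a_{p+1}}_{n_{p+1}}),$$
and by \eqref{eq:flag} every symmetric matrix with this spectrum lies in $\Flag(k_1,\dots,k_p,n)$ (any rearrangement of the $a_j$'s is absorbed into $Q$ via a permutation matrix). Schur--Horn therefore identifies $\{\diag(X) : X \in \Flag(k_1,\dots,k_p,n)\}$ with the majorization polytope $P \coloneqq \{d \in \mathbb{R}^n : d \prec \lambda\}$, the convex hull of all coordinate permutations of $\lambda$. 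Since $a_1 > \dots > a_p > a_{p+1} = 0$, every $d \in P$ satisfies $d \ge 0$ and $\sum_i d_i = b_n$, so $P \subseteq b_n \Delta_{1,n}$; substituting $d = b_n x$ and invoking \eqref{eq:MS} yields the upper bound
$$\max_{X \in \Flag(k_1,\dots,k_p,n)} \sum_{(i,j)\in E} x_{ii} x_{jj} \;=\; \max_{d \in P} \sum_{(i,j)\in E} d_i d_j \;\le\; b_n^2 \Bigl(1 - \frac{1}{\omega(G_n)}\Bigr).$$

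For the matching lower bound, fix a maximum clique $S$ of $G_n$ and write $\omega \coloneqq \omega(G_n) = |S|$. Define $d^* \in \mathbb{R}^n$ by $d^*_i = b_n/\omega$ for $i \in S$ and $d^*_i = 0$ otherwise. Granted $d^* \in P$, Schur--Horn supplies some $X^* \in \Flag(k_1,\dots,k_p,n)$ with $\diag(X^*) = d^*$, and evaluating the objective at $X^*$ gives $\omega(\omega-1)(b_n/\omega)^2 = b_n^2(1 - 1/\omega)$, since every pair of distinct vertices in the clique $S$ contributes two ordered edges to $E$ under the convention of Section~\ref{sec:note}.

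The only nontrivial step, and where the hypothesis $\omega > k$ is consumed, is to verify $d^* \prec \lambda$. Sorted decreasingly, $d^*$ has partial sums $jb_n/\omega$ for $j \le \omega$ and $b_n$ for $j \ge \omega$, while $\lambda$ has partial sums $b_j$ that stabilize at $b_n$ precisely for $j \ge k_p$ (because $a_{p+1} = 0$). Since $\lambda$ is non-increasing, the prefix averages $b_j/j$ are also non-increasing, and combining this monotonicity with the stabilization identifies the infimum in \eqref{eq:k} as $k = k_p$. The hypothesis $\omega > k = k_p$ then delivers both required inequalities: for $\omega \le j < n$, $j > k_p$ forces $b_j = b_n$, so $b_n \le b_j$; for $j \le \omega$, monotonicity gives $b_j/j \ge b_\omega/\omega = b_n/\omega$, so $jb_n/\omega \le b_j$. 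The main obstacle is thus purely the bookkeeping around prefix-average monotonicity needed to extract $k = k_p$ from \eqref{eq:k}; the conceptual content is a direct Schur--Horn plus Motzkin--Straus generalization of \cite[Propositions~5.1 and 5.2]{ZLK24b}.
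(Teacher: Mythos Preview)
Your proof is correct and follows essentially the same route as the paper: Schur--Horn to identify the diagonal image with the permutohedron $\Delta_{k_1,\dots,k_p,n}$, containment in $b_n\Delta_{1,n}$ for the Motzkin--Straus upper bound, and the explicit equal-weight vector on a maximum clique for the lower bound. Where the paper simply asserts that $x_* \in \Delta_{k_1,\dots,k_p,n}$ ``by our choice of $k$,'' you actually carry out the majorization check and, in doing so, extract the cleaner statement $k = k_p$ via prefix-average monotonicity; this is a genuine addition and makes the role of the hypothesis $\omega(G_n) > k$ transparent. One remark: you explicitly invoke \eqref{eq:para} (in particular $a_{p+1}=0$ and nonnegativity of the spectrum) to get $P \subseteq b_n\Delta_{1,n}$, whereas the paper's statement of the proposition does not list this hypothesis and even asserts afterward that the result is independent of the model parameters. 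In fact the paper's own proof needs the same nonnegativity for the simplex containment, so your explicitness here is a virtue rather than an extra restriction.
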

\begin{proof}
Let $\Delta_{k_1,\dots,k_p,n}$ denote the convex hull of all $n!$ permutations of the vector in \eqref{eq:vec}. Indeed, by the Schur--Horn Theorem \cite{horn1954}, $\Delta_{k_1,\dots,k_p,n}$ is exactly the image of $\Flag(k_1,\dots,  k_p, n)$ under the diagonal map $\diag : \mathbb{R}^{n \times n} \to \mathbb{R}^n$. By  restricting $\diag$ to $\Flag(k_1,\dots,  k_p, n)$, we have a surjection
\begin{equation}\label{eq:sur}
\diag : \Flag(k_1,\dots,  k_p, n) \to  \Delta_{k_1,\dots,k_p,n}, \quad X \mapsto \diag(X).
\end{equation}
Denote the objective in \eqref{eq:clique} by
\[
g : \Flag(k_1,\dots,  k_p, n) \to \mathbb{R}, \quad g(X) = \sum_{(i,j) \in E} x_{ii} x_{jj}.
\]
Note that $g$ depends only on the diagonal entries of $X$ and indeed if we define
\[
f: \Delta_{k_1,\dots,k_p,n} \to  \mathbb{R}, \quad f(x) \coloneqq  \sum_{(i,j) \in E} x_i x_j,
\]
then $g = f \circ \diag $, where we have written $x_i \coloneqq x_{ii}$ to avoid clutter. As \eqref{eq:sur} is a surjection, \eqref{eq:clique} is equivalent to
\[
\max_{x \in \Delta_{k_1,\dots,k_p,n}} f(x) = b_n^2\Bigl(1-\frac{1}{\omega(G_n)}\Bigr).
\]
Now observe that $\Delta_{k_1,\dots,k_p,n}$ is contained in the simplex
\[
\{x \in \mathbb{R}^n: x_1 + \dots + x_n = b_n, \; x_i \geq 0,\; i =1,\dots,n\}.
\]
Thus, by \eqref{eq:MS}, $f$ has an upper bound given by
\[
\max_{x \in \Delta_{k_1,\dots,k_p,n}} f(x) \le  b_n^2\Bigl(1-\frac{1}{\omega(G_n)}\Bigr).
\]
Without loss of generality, we may suppose that $S = \{1, \dots, \omega(G_n)\} \subseteq \{1,\dots,n\}$ is a largest clique. Let $x_* \in \mathbb{R}^n$ be given by coordinates
\[
x_1^* = \dots = x_{\omega(G_n)}^* = \frac{b_n}{\omega(G_n)}, \quad x_{\omega(G_n) + 1}^* = \dots = x_n^* = 0.
\]
Then $x_* \in \Delta_{k_1,\dots,k_p,n}$ by our choice of $k$ in \eqref{eq:k} and the assumption that $\omega(G_n) > k$.  It is easy to see that $f(x_*) =  b_n^2\bigl(1-1/\omega(G_n)\bigr)$ attains the upper bound. 
\end{proof}
The result above is independent of the choice of model parameters $a_1,\dots,a_{p+1} \in \mathbb{R}$ so long as they are distinct. However, for the next result, we will need to assume that they are chosen according to \eqref{eq:para}, in particular, $a_{p+1} = 0$.

We will now extend \cite[Theorem~5.3]{ZLK24b}, which shows that unconstrained QP over $\Gr(k,n)$ is NP-hard, to any $\Flag(k_1,\dots,  k_p, n)$, noting that when $p =1$, $\Flag(k, n) =\Gr(k, n)$. Unlike its LP counterpart in Theorem~\ref{thm:LPF}, we may fix $k_1 < \dots < k_p$ in the following result, requiring only $n$ to grow.
\begin{corollary}[Unconstrained flag QP is NP-hard]
Let $n \in \mathbb{N}$ be arbitrary. Let $0 < k_1 < \dots < k_p$ be $p$ fixed positive integers. Then unless $\mathrm{P} = \mathrm{NP}$, there is no $\mathrm{FPTAS}$ that is polynomial in $n$ for unconstrained QP over $\Flag(k_1,\dots,  k_p, n)$.
\end{corollary}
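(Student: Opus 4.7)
The plan is to reduce the NP-hard clique number problem to approximating the QP of Proposition~\ref{prop:CQPF}. The first observation is that the threshold $k$ defined in \eqref{eq:k} is a constant independent of $n$: the partial sums $b_1,\dots,b_{k_p}$ depend only on the fixed data $k_1,\dots,k_p$ and $a_1,\dots,a_p$, and since $a_{p+1}=0$ we have $b_j = b_{k_p} = b_n$ for all $j\ge k_p$. A short computation using $b_j/j \ge b_{k_p}/k_p$ for $j \le k_p$ (as the entries in \eqref{eq:vec} are non-increasing under \eqref{eq:para}) in fact shows $k=k_p$, but the weaker assertion $k \le k_p$ is what the argument needs.

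Given an input graph $G_n$, I would enumerate all $O(n^{k+1})$ subsets of $\{1,\dots,n\}$ of size at most $k+1$ and test each for being a clique. Since $k$ is constant this runs in polynomial time and either (a) exhibits a clique of size $k+1$, certifying $\omega(G_n)>k$, or (b) determines $\omega(G_n)$ exactly when $\omega(G_n)\le k$. It therefore suffices to recover $\omega(G_n)$ in case (a), where Proposition~\ref{prop:CQPF} identifies the QP maximum with $v_r \coloneqq b_n^2(1-1/r)$ at $r=\omega(G_n)$.

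Now suppose, for contradiction, that an $\mathrm{FPTAS}$ $\mathscr{A}$ for the QP exists with running time polynomial in $n$ and $1/\varepsilon$. By the Schur--Horn theorem (as already used in the proof of Proposition~\ref{prop:CQPF}) together with the ordering $0=a_{p+1}<a_p<\dots<a_1$, the diagonal entries of every $X\in\Flag(k_1,\dots,k_p,n)$ lie in $[0,a_1]$, so the objective is confined to $[0, n^2 a_1^2]$ and $R \coloneqq v_{\max}-v_{\min}\le n^2 a_1^2$. The consecutive candidate values $\{v_r : k<r\le n\}$ are separated by $v_{r+1}-v_r = b_n^2/(r(r+1)) \ge b_n^2/n^2$. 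Choosing $\varepsilon = b_n^2/(4 n^4 a_1^2)$ makes $1/\varepsilon$ polynomial in $n$ (since $a_1,b_n$ are constants fixed by $k_1,\dots,k_p$) and forces the additive error $\varepsilon R \le b_n^2/(4n^2)$ to be strictly less than half of the minimum gap; hence the output of $\mathscr{A}$ identifies $\omega(G_n)$ uniquely. Combined with the enumeration step, this yields a polynomial-time algorithm for $\omega(G_n)$, contradicting NP-hardness.

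The main conceptual hurdle is the first paragraph---verifying that $k$ is actually a constant, which rests on keeping $k_1,\dots,k_p$ fixed and on the convention $a_{p+1}=0$; without these one cannot invoke Proposition~\ref{prop:CQPF} for all sufficiently large $\omega(G_n)$. The remaining ingredients are routine: a crude box bound on the objective via Schur--Horn, the elementary minimum-spacing computation for the discrete set $\{v_r\}$, and the standard trick of choosing $\varepsilon$ just below half the gap so that an $\mathrm{FPTAS}$ must reveal the exact value.
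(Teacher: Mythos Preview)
Your proposal is correct and follows essentially the same route as the paper: verify that the threshold $k$ from Proposition~\ref{prop:CQPF} is a constant (because $a_{p+1}=0$ makes $b_n$ and hence $k$ independent of $n$), handle the case $\omega(G_n)\le k$ by brute-force enumeration, and then argue that an $\mathrm{FPTAS}$ would recover $\omega(G_n)$ exactly in the remaining case. The paper's own proof is considerably terser---it simply asserts the last step without spelling out the gap/range computation---whereas you make the choice of $\varepsilon$, the Schur--Horn box bound on the objective, and the spacing $v_{r+1}-v_r=b_n^2/(r(r+1))$ explicit; your additional observation that in fact $k=k_p$ is also correct and slightly sharper than what the paper states.
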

\begin{proof}
Since we now choose model parameters according to \eqref{eq:para}, we have in particular that $a_{p+1} = 0$. Hence $b_n$ in \eqref{eq:bn} and therefore $k$  in  \eqref{eq:k} are both independent of $n$. Thus, even as $n \to \infty$, we can check all subgraphs of $G_n$ of size $\le k$ to determine if $\omega(G_n) \leq k$ in polynomial time. If there is an $\mathrm{FPTAS}$ polynomial in $n$ for unconstrained QP over $\Flag(k_1,\dots,  k_p, n)$, then we can determine $\omega(G_n)$ in polynomial time.
\end{proof}

What about \emph{unconstrained} LP over the flag manifold? Note that the NP-hardness in Theorem~\ref{thm:LPF} is for \emph{constrained} LP. A variation of \cite[Lemma~9.1(ii)]{ZLK24b} gives us the answer, which we record below for completeness. 
In the following, we will assume that  $a_1,\dots,  a_{p+1} \in \mathbb{R}$ are arranged in descending order like in \eqref{eq:para} but without requiring that $a_{p+1} = 0$.
\begin{proposition}[Unconstrained flag LP]\label{prop:LPF}
For any $A \in \mathbb{R}^{n\times n}$,
\[
\max_{X \in \Flag(k_1,\dots,  k_p, n)} \tr(A^\tp X) = \sum_{i=1}^n \sum_{j=1}^{p+1} \lambda_i n_j a_j 
\]
is attained at $X = Q \diag(a_1 I_{n_1}, a_2 I_{n_2},\dots,a_{p+1} I_{n_{p+1}} )Q^\tp$ where $(A+A^\tp)/2 = Q \Lambda Q^\tp$ is an eigenvalue decomposition with $Q\in \O(n)$, $\Lambda = \diag(\lambda_1, \dots, \lambda_n) \in \mathbb{R}^{n\times n}$, $\lambda_1 \ge  \dots \ge  \lambda_n$.  
\end{proposition}
\begin{proof}
The problem transforms into
\begin{align*}
\max_{X \in \Flag(k_1,\dots,  k_p, n)} \tr(A^\tp X) &= \max_{X \in \Flag(k_1,\dots,  k_p, n)} \tr\biggl(\biggl[\frac{A+A^\tp}{2}\biggr]^\tp X \biggr) 
= \max_{X \in \Flag(k_1,\dots,  k_p, n)} \tr(\Lambda^\tp X)\\
&= \max_{X \in \Flag(k_1,\dots,  k_p, n)} \sum_{i=1}^n \lambda_i x_{ii}
= \max_{x \in \Delta_{k_1,\dots,  k_p, n}} \sum_{i = 1}^n \lambda_i x_i.
\end{align*}
Since this last maximization is just standard LP over $\mathbb{R}^n$, the maximum is attained at the vertices of $\Delta_{k_1,\dots,  k_p, n}$, i.e., the maximizer $x_*$ is a permutation of \eqref{eq:vec}. By the rearrangement inequality, the maximum is attained when $x_1^* \ge x_2^* \ge \dots \ge x_n^*$, i.e., when $X = Q \diag(a_1 I_{n_1}, a_2 I_{n_2},\dots,a_{p+1} I_{n_{p+1}} )Q^\tp$.
\end{proof}

\section{Conclusion}

With these results, we may now update our earlier table of summary to:
\begin{center}\small
\begin{tblr}{
  colspec={l|c|c},
}
\SetCell[r=2]{c}{\textsc{manifold}} & \SetCell[c=2]{c}{\textsc{problem complexity}} \\
\hline
 & \textsc{LP} & \textsc{unconstrained QP} \\ \hline
Euclidean & P & P \\
Stiefel & NP-hard & NP-hard \\
Grassmann & NP-hard & NP-hard \\
Flag & NP-hard & NP-hard \\
Cartan & SDP & NP-hard
\end{tblr}
\end{center}

Proposition~\ref{prop:LPF} and \cite[Lemma~9.1]{ZLK24b} collectively show that unconstrained LP over these manifolds have simple closed-form solutions. Apart from this trivial case, LP and unconstrained QP are, as we pointed out earlier, the simplest possible optimization problems over any manifold. Any constrained optimization problem likely contains LP as a special or degenerate case; and any unconstrained optimization problem likely contains unconstrained QP as a special or degenerate case. The revelation that these simple cases are NP-hard suggests that other more complex optimization problems involving more complex objectives or constraints are almost certainly also NP-hard. We take this as a sign that manifold optimization ought to be approached in a manner similar to polynomial optimization, where tractable convex relaxations play an indispensable role.

\section*{Data availability statement}

This manuscript has no associated data.

\section*{Ethics declarations}

\subsection*{Conflict of interest} The authors declare that they have no conflict of interest.

\bibliographystyle{abbrv}

\end{document}